\documentclass{amsart}
\usepackage{amssymb}
\usepackage{amsmath}
\usepackage{textcomp}
\hoffset-.4in%

\setlength{\textheight}{21cm}
\setlength{\textwidth}{5.6in}
\makeindex

\newcommand{\ba}{\begin{array}}
\newcommand{\eea}{\end{eqnarray}}
\newcommand{\ea}{\end{array}}

\newtheorem{definition}{Definition}[section]
\newtheorem{theorem}[definition]{Theorem}
\newtheorem{lemma}[definition]{Lemma}

\newtheorem{remark}[definition]{Remark}

\usepackage[matrix,arrow,curve]{xy}
\usepackage[utf8x]{inputenc}
\usepackage{tikz}

\begin{document}
\title[Weinstein Homotopies]{Weinstein Homotopies}
\author[S. Mukherjee]{Sauvik Mukherjee}
\address{Presidency University, Kolkata, India.\\
e-mail:mukherjeesauvik@gmail.com\\}
\keywords{Weinstein structures, Liouville homotopies}

\begin{abstract} 
In this paper we discuss a problem mentioned by Eliashberg in his paper \cite{Eliashberg}. He has asked if two completed Weinstein structures $(\hat{X},\lambda_0,\phi_0)$ and $(\hat{X},\lambda_1,\phi_1)$ on the same symplectic manifold $(\hat{X},\omega)$ can be homotoped through Weinstein structures. We discuss this problem and prove a weak partial result by assuming some additional conditions. 
 \end{abstract}
\maketitle

\section{introduction}
\label{1}
 In this paper we discuss a problem mentioned by Eliashberg in his paper \cite{Eliashberg}. He has asked if two completed Weinstein structures $(\hat{X},\lambda_0,\phi_0)$ and $(\hat{X},\lambda_1,\phi_1)$ on the same symplectic manifold $(\hat{X},\omega)$ can be homotoped through Weinstein structures. We discuss this problem and prove a weak partial result by assuming some additional conditions.\\

We begin with the basic definitions. Let $(X,\omega)$ be a $2n$-dimensional symplectic domain with boundary with an exact symplectic form $\omega$ and primitive form $\lambda$ i.e, $d\lambda=\omega$.\\

A Liouville form is a choice of a primitive form $\lambda$ such that $\lambda_{\mid \partial X}$ is a contact form on $\partial X$ and the orientation on $\partial X$ by the form $\lambda \wedge d\lambda^{n-1}_{\mid \partial X}$ coincides with its orientation as the boundary of $(X,\omega)$. The $\omega$-dual vector field $Z$ of $\lambda$ is called the Liouville vector field. $Z$ satisfies $L_Z\omega=\omega$ and hence its flow is conformally symplectically expanding. \\

Every Liouville domain $X$ can be completed in the following way.Set \[\hat{X}=X\cup(\partial X\times [0,\infty))\] and extend $\lambda$ on $\hat{X}$ as $e^s(\lambda_{\mid \partial X})$ on the attached end. Given a Liouville domain $\mathcal{L}=(X,\omega,\lambda)$ consider the compact set \[Core(\mathcal{L})=\cap_{t>0}Z^{-t}(X)\]It is called Core or the Skeleton of the Liouville domain.\\

Let $\lambda_0$ and $\lambda_1$ be two Liouville forms on a fixed symplectic manifold $(X,\omega)$, moreover let $Z_0$ and $Z_1$ be the respective Liouville vector fields. Then obviously $\lambda_1=\lambda_0+dh$ for some $h:X\to \mathbb{R}$ and $Z_1=Z_0+Z_h$ where $Z_h$ is the hamiltonian vector field for $h$.\\

A Liouville cobordism $(W,\omega,Z)$ is a cobordism $W$ with an exact symplectic form $\omega$ such that the Liouville vector field $Z$ points inward along $\partial_{-}W$ and outward along $\partial_{+}W$.\\

\begin{remark}
\label{completed liouville}
On the infinite end of $\hat{X}$, the Liouville vector field is given by $\partial_s$ irrespective of the choice of the Liouville form $\lambda$ on $X$.
\end{remark}

Now we shall define the Weinstein structures. For this we need to recall few notions. A complete vector field is a vector field whose flow exists for all forward and backward time.\\

Let $\phi$ be a Morse function. A vector field $X$ is called gradient-like for $\phi$ if it satisfies \[X.\phi\geq \delta (|X|^2+|d\phi|^2)\] for some $\delta >0$ and $|X|$ is with respect to some Riemannian metric and $|d\phi|$ is with respect to its dual metric.\\

\begin{definition}(\cite{Kai})
\label{Weinstein-def}
A Weinstein manifold $(X,\omega,Z,\phi)$ is a symplectic manifold $(X,\omega)$ with a complete Liouville vector field $Z$ which is gradient like with respect to the exhausting Morse function $\phi$. A Weinstein cobordism $(W,\omega,Z,\phi)$ is a Liouville cobordism $(W,\omega,Z)$ whose Liouville vector field $Z$ is gradient-like with respect to a Morse function $\phi$ which is constant on the boundary. A Weinstein cobordism with $\partial_{-}W=\Phi\ (empty)$ is called a Weinstein domain.
\end{definition}

In \cite{Eliashberg} Eliashberg has asked the following question.\\

{\bf Problem:} Let $(\hat{X},\lambda_0,\phi_0)$ and $(\hat{X},\lambda_1,\phi_1)$ be two completed Weinstein structures on the same symplectic manifold $(\hat{X},\omega)$. Are they homotopic as Weinstein structures ?\\

Obviously if $Z_0$ and $Z_1$ are the respective Liouville vector fields then $Z_1=Z_0+Z_h$ for $h$ satisfying $\lambda_1=\lambda_0+dh$ and hence \[Z_t=Z_0+Z_{th}=Z_0+tZ_{h},\ t\in [0,1]\] gives a homotopy of Liouville vector fields. However $(X,\omega,Z_t)$ may not be a Liouville homotopy. We refer the reader \cite{Kai} for a precise definition of Liouville homotopy.\\

On Weinstein cobordisms a similar result has been proved in \cite{Kai} although the Weinstein structures need to flexible. We refer the reader to \cite{Kai} for a precise definition of flexible Weinstein structures.

\begin{theorem}(\cite{Kai})
\label{flexible}
Let $(W,\omega_0,\lambda_0,\phi_0)$ and $(W,\omega_1,\lambda_1,\phi_1)$ be two flexible Weinstein structures on the same cobordism $W$ with dimension $2n>4$ which coincide on $Op(\partial_{-}W)$. Let $\eta_t$ be a homotopy rel $Op(\partial_{-}W)$ of non-degenerate two forms on $W$ connecting $\omega_0$ and $\omega_1$. Then there exists a homotopy of flexible Weinstein structures connecting the given ones.
\end{theorem}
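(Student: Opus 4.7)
The plan is to reduce the assertion to the parametric h-principle for flexible Weinstein structures developed in \cite{Kai}. The strategy has three ingredients: promote the homotopy of non-degenerate two-forms $\eta_t$ to a homotopy of \emph{formal} Weinstein data, interpolate the two given Morse functions within the class of Morse functions of the appropriate index, and invoke flexibility to deform the resulting formal homotopy into a genuine Weinstein homotopy.

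First I would observe that a Weinstein structure $(W,\omega,Z,\phi)$ determines formal data $(\eta,J,\phi)$ consisting of a non-degenerate two-form $\eta$, an almost complex structure $J$ compatible with $\eta$, and a Morse function $\phi$ whose critical points all have index at most $n$. Because the space of almost complex structures compatible with a fixed non-degenerate two-form is contractible, the given path $\eta_t$ lifts, uniquely up to contractible choice, to a path of pairs $(\eta_t,J_t)$, constant on $Op(\partial_{-}W)$. The Morse functions $\phi_0$ and $\phi_1$ are necessarily of index at most $n$, since each is gradient-like for a Liouville field; the space of such exhausting Morse functions that agree on $Op(\partial_{-}W)$ is path-connected (using cancellations and births of critical pairs of complementary indices $\le n$), so I can construct a family $\phi_t$ interpolating between them rel $Op(\partial_{-}W)$.

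Assembling $(\eta_t,J_t,\phi_t)$ produces a homotopy of formal Weinstein structures on $W$ whose endpoints coincide with the formal data underlying the given honest flexible structures. The final step is to apply the parametric, relative h-principle for flexible Weinstein structures from \cite{Kai}: such a formal homotopy can be deformed, rel endpoints and rel $Op(\partial_{-}W)$, to a homotopy of genuine flexible Weinstein structures. The dimension hypothesis $2n>4$ is consumed at precisely this stage, since the h-principle rests on the looseness of Legendrian attaching spheres, a notion that requires codimension at least two.

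The main obstacle is justifying the use of the parametric h-principle in the relative form needed here. Two subtleties must be addressed: verifying that the interpolating family $\phi_t$ can be kept exhausting and proper so that the sublevel sets remain compact and the handle-by-handle induction of \cite{Kai} applies, and confirming that looseness of the attaching Legendrians is preserved across the birth/death events that appear in the family $\phi_t$. Both issues are handled in \cite{Kai}, but keeping careful track of the data on the cylindrical end and at moments when critical pairs are created or cancelled is the delicate point of the argument.
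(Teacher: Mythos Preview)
The paper does not prove this theorem at all: it is quoted from \cite{Kai} as a known result, with no argument supplied beyond the citation. So there is no ``paper's own proof'' to compare your proposal against.

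That said, your sketch is a reasonable summary of the strategy actually used in \cite{Kai}: pass from the two genuine flexible Weinstein structures to their underlying formal data, connect the formal data using the given homotopy $\eta_t$ of non-degenerate two-forms together with an interpolation of the Morse functions (through generalized Morse functions with indices $\le n$), and then invoke the relative parametric $h$-principle for flexible Weinstein structures to upgrade the formal homotopy to a genuine one. Your identification of the delicate points---properness of the interpolating functions and the behaviour of looseness across birth/death moments---is accurate, and these are indeed the issues that absorb the bulk of the work in \cite{Kai}. But none of this is carried out in the present paper; the author is simply invoking the result for context before turning to a different question.
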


Let us now return to the question asked by Eliashberg. We assume that all the zeros of $Z_t$ are non-degenerate for all $t\in [0,1]$. So the zeros of $Z_t$ executes curves $\gamma_i(t),\ i=1,...,k$ (say). We consider $\tilde{X}=\hat{X}\times [0,1]$ and define vector field $Z(x,t)=Z_t(x)$ on $\tilde{X}$. The curves $\gamma_i$'s define curves $\Gamma_i$'s on $\tilde{X}$ as follows \[\Gamma_i(t)=(\gamma_i(t),t)\] Consider two tubular neighborhoods of $\Gamma_i$ as $\Gamma_i\subset N'_i\subset N''_i$. Let $\Psi_i:\tilde{X}\to \mathbb{R}$ be cutoff functions such that $\Psi_i=1\ on\ N'_i$ and $\Psi_i=0\ outside\ N''_i$. Define $\tilde{Z}$ on $\tilde{X}$ by canonically removing the zeros of $Z$ as follows. Define $\tilde{Z}$ close to $\Gamma_i$ as  \[\tilde{Z}(x,t)=\Psi_i(x,t)\partial_t+(1-\Psi_i(x,t))Z(x,t)\] Let $\tilde{\mathcal{F}}$ be the foliation defined by $\tilde{Z}$. Then $\tilde{\mathcal{F}}$ is a regular foliation.

\begin{definition}
\label{Liou-Uni-Open} 
We call the homotopy of the Liouville vector field $Z_t$  uniformly open if it satisfies 
\begin{enumerate}
\item All the zeros of $Z_t$ are non-degenerate for all $t\in [0,1]$\\
\item The foliation $\tilde{\mathcal{F}}\times \tilde{\mathcal{F}}$ on $\tilde{X}\times \tilde{X}$ is uniformly open\\
\end{enumerate}
\end{definition}

Please see \ref{Uniform open} bellow for the definition of Uniformly open foliation. Now we state the main theorem of this paper.

\begin{theorem}
\label{Main}
Let $(\hat{X},\lambda_0,\phi_0)$ and $(\hat{X},\lambda_1,\phi_1)$ be two completed Weinstein structures on the same symplectic manifold $(\hat{X},\omega)$ and let the homotopy of the Liouville vector field $Z_t$ is uniformly open (\ref{Liou-Uni-Open}), moreover $Z_0$ and $Z_1$ do not have a common zero. Then $(\hat{X},\lambda_0,\phi_0)$ and $(\hat{X},\lambda_1,\phi_1)$ can joined by a homotopy of Weinstein structures for which the underlying symplectic structure $\omega$ remains fixed.
\end{theorem}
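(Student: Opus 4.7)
The plan is to reformulate the problem as constructing a single function $\Phi: \tilde X = \hat X \times [0,1] \to \mathbb R$ whose restriction $\phi_t = \Phi(\cdot, t)$ is a Morse function on $\hat X$ for every $t$, with critical points precisely on the traces $\gamma_i(t)$ of the zeros of $Z_t$, agreeing with the prescribed $\phi_0, \phi_1$ at the endpoints, and such that $Z_t$ is gradient-like for $\phi_t$. The Liouville homotopy $Z_t = Z_0 + tZ_h$ is already given, and by Remark \ref{completed liouville} we have $Z_t = \partial_s$ on the cylindrical end for every $t$, so completeness is automatic throughout the deformation.

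First I would handle the construction near each zero-curve $\Gamma_i$. Since all zeros of $Z_t$ are non-degenerate for every $t$, a parametric version of the Morse lemma for vector fields yields smoothly varying coordinates on a tubular neighborhood $N'_i$ of $\Gamma_i$ in which $Z_t$ takes a standard hyperbolic form, with the Morse index locally constant along $\Gamma_i$ by continuity. In these coordinates the quadratic model $\Phi(\cdot, t) = -\tfrac{1}{2}\sum x_j^2 + \tfrac{1}{2}\sum y_k^2$ renders $Z_t$ gradient-like. The given functions $\phi_0, \phi_1$ admit, in possibly different coordinates, similar local normal forms; since both choices represent the same Morse index and live in a contractible space of reductions, they may be interpolated inside $N'_i$ to furnish a local family of Morse models agreeing with $\phi_0, \phi_1$ at $t=0,1$.

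Next I would extend $\Phi$ across the complement of the tubes $N'_i$, where $\tilde Z$ is nowhere zero and generates the regular 1-dimensional foliation $\tilde{\mathcal F}$ on $\tilde X$. On this region the task reduces to producing a function which is strictly increasing along each leaf of $\tilde{\mathcal F}$, with boundary data on $\hat X \times \{0,1\}$ and on the tube boundaries $\partial N'_i$ already prescribed by the previous step. Strict monotonicity along leaves is an open first-order condition that is most naturally formulated on pairs of points lying on a common leaf, i.e.\ on the leaves of $\tilde{\mathcal F} \times \tilde{\mathcal F}$ in $\tilde X \times \tilde X$; this is exactly the setting of Gromov's h-principle for uniformly open foliated relations. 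The hypothesis that $\tilde{\mathcal F} \times \tilde{\mathcal F}$ is uniformly open therefore converts a formal solution (trivially assembled from the endpoint and tube-boundary data) into a genuine holonomic $\Phi$ with the required leafwise monotonicity.

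The main obstacle I expect is the gluing between the quadratic Morse model on $N'_i$ and the globally monotonic function produced by the h-principle on the complement. Both satisfy the open convex inequality $Z_t\cdot\phi_t \geq \delta(|Z_t|^2 + |d\phi_t|^2)$ defining the gradient-like condition, so I would perform the gluing on the collar $N''_i \setminus N'_i$ via cutoffs in the spirit of those used to define $\tilde Z$; convexity of the inequality guarantees that convex combinations remain gradient-like. A final check is that the resulting $\phi_t$ is Morse on each slice with no spurious critical points outside the $\Gamma_i$'s, which follows from the strict leafwise monotonicity on the complement together with the hypothesis that $Z_0$ and $Z_1$ share no zeros, ensuring that the traces $\Gamma_i$ remain embedded and disjoint at the endpoints and that the family Morse lemma applies uniformly in $t$. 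Once this is achieved, $(\hat X, \omega, Z_t, \phi_t)$ is the desired Weinstein homotopy with $\omega$ fixed.
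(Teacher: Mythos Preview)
Your overall architecture---handle the zero-curves $\Gamma_i$ locally, run an h-principle on the complement, then glue---is the paper's strategy. But your account of why the problem must be posed on $\tilde X\times\tilde X$ with the product foliation $\tilde{\mathcal F}\times\tilde{\mathcal F}$ is wrong, and this is the crux. The gradient-like condition $d\Phi(\tilde Z)>\tilde\delta(|\tilde Z|^2+|d\Phi|^2)$ is already a local, open, first-order differential relation on $\tilde X$; there is no need (and no way, within the jet-bundle framework of Theorem~\ref{bertelson}) to recast it as a condition on pairs of points lying on a common leaf. The real obstruction is Remark~\ref{*}: $\tilde{\mathcal F}$ is one-dimensional, so $(\tilde X,\tilde{\mathcal F})$ can \emph{never} be uniformly open, and Bertelson's theorem does not apply on $\tilde X$ itself. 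The paper's device is to double: on $M=\tilde X\times\tilde X$ one sets up a relation $\mathcal R\subset (M\times\mathbb R^2)^{(1)}$ asking that each factor independently satisfy the gradient-like inequality; the product foliation is two-dimensional and, by the hypothesis in Definition~\ref{Liou-Uni-Open}, uniformly open. After the h-principle yields a holonomic pair $(f_0,f_1)$ one simply discards one component and sets $\phi_t(x)=f_0(x,t)$.

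Two smaller gaps. First, the formal solution is not ``trivially assembled'': most of Section~4 is devoted to building a non-exact one-form section of $\mathcal R$ via the linear interpolation $\phi_t=(1-t)\phi_0+t\phi_1$ and auxiliary forms $\alpha_{t'},\eta_{t'}$, and the hypothesis that $Z_0,Z_1$ share no zero enters precisely there, to bound $|d\phi_t+d\phi_1|$ away from zero along the compact curves $\Gamma_i$. Second, your convexity argument for gluing only covers \emph{constant} convex combinations; with a spatial cutoff $\beta$ the cross term $(g_t-f_t)Z_t(\beta_t)$ appears, and the paper controls it not by convexity but by taking the additive constant $a$ in the quadratic Morse model $g_t$ large.
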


\begin{remark}
In \ref{flexible} the underlying symplectic structure is not fixed.
\end{remark}

\section{$h$-Principle} This section does not have any new result, we just recall some facts from the theory of $h$-principle which we shall need in our proof.\\

 Let $X\to M$ be any fiber bundle and let $X^{(r)}$ be the space of $r$-jets of jerms of sections of $X\to M$ and $j^rf:M\to X^{(r)}$ be the $r$-jet extension map of the section $f:M\to X$. If $X=M\times N$ then $X^{(r)}$ is denoted as $J^r(M,N)$. A section $F:M\to X^{(r)}$ is called holonomic if there exists a section $f:M\to X$ such that $F=j^rf$. In the following we use the notation $Op(A)$ to denote a small open neighborhood of $A\subset M$ which is unspecified. \\
 
 Let $\mathcal{R}$ be a subset of $X^{(r)}$. Then $\mathcal{R}$ is called a differential relation of order $r$. $\mathcal{R}$ is said to satisfy $h$-principle if any section $F:M\to \mathcal{R}\subset X^{(r)}$ can be homotopped to a holonomic section $\tilde{F}:M\to \mathcal{R}\subset X^{(r)}$ through sections whose images are contained in $\mathcal{R}$. Put differently,  if the space of sections of $X^{(r)}$ landing into $\mathcal{R}$ is denoted by $Sec \mathcal{R}$ and the space of holonomic sections of $X^{(r)}$ landing into $\mathcal{R}$ is denoted by $Hol \mathcal{R}$ then $\mathcal{R}$ satisfies $h$-principle if the inclusion map $Hol \mathcal{R}\hookrightarrow Sec \mathcal{R}$ induces a epimorphism at $0$-th homotopy group $\pi_0$. $\mathcal{R}$ satisfies parametric $h$-principle if $\pi_k(Sec \mathcal{R}, Hol \mathcal{R})=0$ for all $k\geq 0$.\\ 
 
 Let $p:X\to M$ be a fiber bundle and by $Diff_MX$ we denote the fiber preserving diffeomorphisms $h_X:X\to X$, i.e, $h_X\in Diff_MX$ if and only if there exists diffeomorphism $h_M:M\to M$ such that the following diagram commutes

 \[
\xymatrix@=2pc@R=2pc{
& X\ar@{->}[rr]^{h_X} \ar@{->}[d]_{p} & & X \ar@{->}[d]^{p}\\
& M\ar@{->}[rr]^{h_M} & & M\\
}
\]

Let $\pi:Diff_MX\to Diff M$ be the projection $h_X\mapsto h_M$. We call a fiber bundle $p:X\to M$ natural if there exists a homomorphism $j:Diff M\to Diff_MX$ such that $\pi \circ j=id$. For a natural fiber bundle $p:X\to M$ the associated jet bundle $X^{(r)}\to M$ is also natural. The lift is given by \[j^r:Diff M\to Diff_MX^{(r)},\ h\mapsto h_*\] where $h_*(s)=J^r_{j(h)\circ \bar{s}}(h(m))$, $s\in X^{(r)}$, $m=p^r(s)\in M$ and $\bar{s}$ is a local section near $m$ which represents the $r$-jet $s$. Observe $(h^{-1})_*=(h_*)^{-1}$ and hence define $h^*=h_*^{-1}$.\\

For a natural fiber bundle $X\to M$, a differential relation $\mathcal{R}\subset X^{(r)}$ is called $Diff M$-invariant if the action $s\mapsto h_*s,\ h\in Diff M$, leaves $\mathcal{R}$ invariant.\\

\begin{theorem}(\cite{Gromov})
\label{gromov}
If a relation $\mathcal{R}$ is open and $Diff M$-invariant on an open manifold $M$ then it satisfies parametric $h$-principle.
\end{theorem}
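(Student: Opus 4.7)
The plan is to reduce the construction of a Weinstein homotopy with fixed symplectic form to producing a one-parameter family of Morse functions $\phi_t:\hat X \to \mathbb R$ interpolating $\phi_0$ and $\phi_1$ for which the canonical Liouville vector field $Z_t = Z_0 + t Z_h$ is gradient-like. Because $\lambda_t = \lambda_0 + t\, dh$ automatically satisfies $d\lambda_t = \omega$, the entire problem collapses to constructing such a family $\phi_t$. I would repackage the family as a single function $\Phi:\tilde X = \hat X \times [0,1] \to \mathbb R$ with $\Phi|_{\hat X \times \{i\}} = \phi_i$ whose restriction to each slice is Morse and gradient-like for $Z_t$.

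The gradient-like inequality $d\phi_t(Z_t) \ge \delta(|Z_t|^2 + |d\phi_t|^2)$ is an open condition, and non-degeneracy of critical points is open on the $2$-jet, so the leafwise differential relation $\mathcal R$ on $J^2(\tilde X,\mathbb R)$ that encodes ``slicewise Morse with $Z_t$ gradient-like'' is open. Away from the zero loci $\Gamma_i$ the leaves of $\tilde{\mathcal F}$ are the integral curves of $Z_t$ in the slices (crossed with nothing), and $\mathcal R$ is invariant under the leaf-preserving diffeomorphism group of $\tilde{\mathcal F}$. The hypothesis that $\tilde{\mathcal F}\times\tilde{\mathcal F}$ on $\tilde X\times \tilde X$ is uniformly open is exactly what upgrades Gromov's Theorem \ref{gromov} to the foliated, parametric setting required here, and delivers a holonomic section of $\mathcal R$ on the complement of $\bigcup_i N''_i$, i.e.\ a function $\Phi$ that is Weinstein-compatible there.

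On each tubular neighborhood $N'_i$ of $\Gamma_i$ the construction is done by hand. Non-degeneracy of the zeros of $Z_t$ along $\gamma_i(t)$ lets me apply a parametric Morse lemma to bring $Z_t$ to a conformally symplectic normal form, and thereby produce a canonical quadratic germ $\phi_{i,t}$ of the correct index for which $Z_t$ is gradient-like by inspection. The hypothesis that $Z_0$ and $Z_1$ have no common zero guarantees that the curves $\gamma_i(t)$ do not connect a critical point of $\phi_0$ to a critical point of $\phi_1$, so the local quadratic models can be matched at $t=0$ with $\phi_0$ and at $t=1$ with $\phi_1$ without obstruction. The cutoff functions $\Psi_i$ used to build $\tilde Z$ then serve as the template for interpolating, on the collar $N''_i\setminus N'_i$, between the local quadratic model and the leafwise h-principle output.

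The main obstacle is ensuring that the gluing on $N''_i\setminus N'_i$ preserves the gradient-like inequality and non-degeneracy on every slice simultaneously. A convex combination of two functions satisfying the pointwise inequality for $Z_t$ need not itself satisfy it unless the common constant $\delta$ can be chosen uniformly in $t$ and unless the relative sizes of $N'_i \subset N''_i$ are controlled in terms of the size of $d\Phi$ on the collar. Supplying this uniform control is where the full strength of the uniform openness of $\tilde{\mathcal F}\times\tilde{\mathcal F}$ is used, and Remark \ref{completed liouville} (stating $Z_t = \partial_s$ on the infinite end) is what makes the same estimates extend over the cylindrical end so that the resulting $\Phi_t$ is exhausting and complete, giving a genuine Weinstein homotopy.
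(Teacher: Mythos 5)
The statement you were asked to prove is Gromov's classical $h$-principle theorem: \emph{every open, $Diff\,M$-invariant differential relation on an open manifold satisfies the parametric $h$-principle}. Your proposal does not engage with this statement at all. Instead, it sketches the proof of the paper's Main Theorem (\ref{Main}) --- constructing a family $\phi_t$ interpolating $\phi_0$ and $\phi_1$, setting up the leafwise relation on $\tilde X$, handling the zero curves $\Gamma_i$ by local quadratic models, and gluing --- and in doing so it \emph{invokes} Theorem \ref{gromov} (and its foliated upgrade, Bertelson's Theorem \ref{bertelson}) as a black box. An argument that uses the theorem as a hypothesis cannot serve as a proof of it; the quantifiers are also wrong in spirit, since the statement concerns an arbitrary open invariant relation $\mathcal{R} \subset X^{(r)}$ on an arbitrary open manifold $M$, whereas everything you write is specific to the gradient-like relation on $\tilde X \times \tilde X$.

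A genuine proof would have to contain the actual mechanism behind Gromov's theorem, none of which appears in your text: one chooses a polyhedron $K \subset M$ of positive codimension (a ``core'' onto which the open manifold $M$ compresses via an isotopy $g_t: M \to M$), proves a holonomic approximation statement allowing any formal section of $\mathcal{R}$ to be deformed to a holonomic one on a small wiggled neighborhood of $K$ (here openness of $\mathcal{R}$ is used to keep the deformation inside the relation), and then uses the $Diff\,M$-invariance to transport this local solution over all of $M$ by pulling back along the compressing isotopy, i.e.\ replacing the section $F$ by $(g_1)^* \tilde F$ where $\tilde F$ is holonomic near $K$. The parametric statement follows by running the same argument with a compact parameter space, yielding $\pi_k(Sec\,\mathcal{R}, Hol\,\mathcal{R}) = 0$ for all $k \geq 0$. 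Note also that the paper itself offers no proof of this theorem --- it is quoted from Gromov's \emph{Partial Differential Relations} --- so the appropriate target for your sketch was either that source or a self-contained account along the lines above; what you have written is instead a (reasonable, but misplaced) outline of Section 4 of the paper.
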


\section{Bertelson's Uniformly Open Foliations} In this section we recall some result from \cite{Bertelson} and \cite{Bertelson1}.\\

\begin{definition}(\cite{Bertelson})
\label{Uniform open}
A foliated manifold $(M,\mathcal{F})$ is called uniformly open if there exists a function $f:M\to [0,\infty)$ such that 
\begin{enumerate}
\item $f$ is proper,\\
\item $f$ has no leafwise local maxima,\\
\item $f$ is $\mathcal{F}$-generic.
\end{enumerate}
\end{definition} 

\begin{remark}
\label{*}
Observe that if $dim \mathcal{F}=1$ then $(M,\mathcal{F})$ can not be uniformly open as on a one dimensional manifold, a critical point will be either a local maximum or minimum. 
\end{remark}

So let us explain the notion $\mathcal{F}$-generic. In order to do so we need to define the singularity set $\Sigma^{(i_1,i_2,...,i_k)}(f)$ for a map $f:M\to W$. $\Sigma^{i_1}(f)$ is the set \[\{p\in M:dim(ker(df)_p)=i_1\}\] It was proved by Thom \cite{Thom} that for most maps $\Sigma^{i_1}(f)$ is a submanifold of $M$. So we can restrict $f$ to $\Sigma^{i_1}(f)$ and construct $\Sigma^{(i_1,i_2)}(f)$ and so on. In \cite{Thom} it has been proved that there exists $\Sigma^{(i_1,...,i_k)}\subset J^k(M,W)$ such that $(j^kf)^{-1}\Sigma^{(i_1,...,i_k)}=\Sigma^{(i_1,...,i_k)}(f)$.\\

 Let us set $W=\mathbb{R}$ as this is the only situation we need. Let $(M,\mathcal{F})$ be a foliated manifold with a leaf $F$. Define the restriction map \[r_F:J^k(M,\mathbb{R})\to J^k(F,\mathbb{R}):j^kf(x)\mapsto j^k(f_{\mid F})(x)\] Define foliated analogue of the singularity set as \[\Sigma^{(i_1,i_2,...,i_k)}_{\mathcal{F}}:=\cup_{\{F\ leaf\ of\ \mathcal{F}\}} r_F^{-1}\Sigma^{(i_1,i_2,...,i_k)}\]

\begin{definition}(\cite{Bertelson})
A smooth real valued function $f:M\to \mathbb{R}$ is called $\mathcal{F}$-generic if the first jet $j^1f \pitchfork \Sigma^{(n)}_{\mathcal{F}}$ and the second jet $j^2f \pitchfork \Sigma^{(i_1,i_2)}_{\mathcal{F}}$ for all $(i_1,i_2)$.
\end{definition}

\begin{definition}(\cite{Bertelson})
\label{Foliated invariant}
An isotopy of the manifold $M$ is a family $\psi_t,\ t\in [0,1]$ of diffeomorphisms of $M$ such that the map $\psi:M\times [0,1]\to M\ :(x,t)\mapsto \psi_t(x)$ is smooth and $\psi_0=id_M$. Consider a foliation $\mathcal{F}$ on $M$. A foliated isotopy of $(M,\mathcal{F})$ is an isotopy $\psi_t$ of $M$ that preserves the foliation $\mathcal{F}$, that is, $(\psi_t)_*(T\mathcal{F})=T\mathcal{F}$ for all $t\in [0,1]$. A relation $\mathcal{R}$ is called foliated invariant on $(M,\mathcal{F})$ if the action by foliated isotopies leaves $\mathcal{R}$ invariant.
\end{definition}

\begin{theorem}(\cite{Bertelson})
\label{bertelson}
On an uniformly open foliated manifold, any open, foliated invariant differential relation satisfies the parametric $h$-principle.
\end{theorem}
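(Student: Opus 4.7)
The plan is to reformulate the construction of the desired Weinstein homotopy as an $h$-principle problem on $\tilde X = \hat X \times [0,1]$ and invoke Bertelson's Theorem~\ref{bertelson} via the product trick that is built into Definition~\ref{Liou-Uni-Open}.

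First, I would observe that the underlying symplectic form is automatically preserved: since $\lambda_1 = \lambda_0 + dh$, the interpolation $\lambda_t = \lambda_0 + t\,dh$ satisfies $d\lambda_t = \omega$ for every $t$. The associated Liouville vector field is $Z_t = Z_0 + tZ_h$, with non-degenerate zeros tracing the smooth curves $\gamma_i(t)$. On the infinite end of $\hat X$ both $Z_0$ and $Z_1$ coincide with $\partial_s$ by Remark~\ref{completed liouville}, so $Z_h \equiv 0$ there and $Z_t = \partial_s$ is complete for every $t$. Thus the theorem reduces to producing a smooth family of exhausting Morse functions $\phi_t:\hat X \to \R$ interpolating $\phi_0$ and $\phi_1$, whose critical set equals $\{\gamma_i(t)\}_i$ and for which $Z_t$ is gradient-like.

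Second, I would encode the family $\phi_t$ as a section $\Phi:\tilde X \to \R$ of the trivial line bundle and formulate a differential relation $\cR \subset J^2(\tilde X, \R)$ enforcing, at each $(x,t)$: (i) the inequality $Z_t \cdot \phi_t \geq \delta(|Z_t|^2 + |d\phi_t|^2)$ on the complement of the curves $\Gamma_i$; (ii) a Morse point for $\phi_t$ at $\gamma_i(t)$ with the index carried by $\phi_0$ along that branch; and (iii) a fixed exhausting template on the cylindrical end built from $e^s \lambda_{\mid \partial X}$, so that properness is automatic. The given $\phi_0, \phi_1$ furnish a holonomic section of $\cR$ over $\partial \tilde X$, and the hypothesis that $Z_0, Z_1$ share no common zero ensures that the curves $\gamma_i$ match critical points of $\phi_0$ to those of $\phi_1$ without collision, so these boundary data can be extended to a formal (not yet holonomic) section $\sigma$ of $\cR$ over all of $\tilde X$.

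Third, to apply Bertelson's Theorem I must work in a uniformly open foliated setting. Remark~\ref{*} obstructs using $\tilde{\mathcal{F}}$ directly, and this is precisely why Definition~\ref{Liou-Uni-Open} asks for $\tilde{\mathcal{F}} \times \tilde{\mathcal{F}}$ to be uniformly open on $\tilde X \times \tilde X$. I would lift $\cR$ to a relation $\hat\cR \subset J^2(\tilde X \times \tilde X, \R)$ whose sections are of the ``diagonal'' form $\Psi(x_1, x_2) = \Phi(x_1) + \Phi(x_2)$ with each summand required to lie in $\cR$. Because $\tilde Z$ is tangent to $\tilde{\mathcal{F}}$ and the gradient-like inequality depends only on leafwise data (with a background metric chosen compatibly), $\hat\cR$ is open and invariant under foliated isotopies of $(\tilde X \times \tilde X, \tilde{\mathcal{F}} \times \tilde{\mathcal{F}})$. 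Theorem~\ref{bertelson} then delivers the parametric $h$-principle: the formal section $\sigma \oplus \sigma$ is homotoped rel boundary to a holonomic section of $\hat\cR$, from which the desired family $\phi_t$ is recovered by projecting to a single factor.

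The hardest step will be the descent from $\tilde X \times \tilde X$ back to $\tilde X$: one must argue that Bertelson's holonomic output can be arranged (or symmetrized) to be of the factored form $\Phi(x_1) + \Phi(x_2)$, either by restricting the $h$-principle to the symmetric subspace or by averaging against a fixed transverse section after the fact. A secondary difficulty is keeping the exhausting condition uniform in $t$, which I would handle by prescribing $\phi_t$ outside a large compact piece of $\hat X$ to coincide with a standard model adapted to $\lambda_t|_{\text{end}}$, thereby localizing the $h$-principle to a compactly supported deformation.
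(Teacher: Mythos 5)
There is a fundamental mismatch of target here: the statement you were asked to prove is Theorem \ref{bertelson} itself, i.e.\ Bertelson's $h$-principle for open, foliated-invariant relations on uniformly open foliated manifolds. Your proposal does not prove this statement; it \emph{invokes} it as the key black box ("Theorem \ref{bertelson} then delivers the parametric $h$-principle") and instead sketches a proof of the paper's Main Theorem \ref{Main}. As a proof attempt of \ref{bertelson} this is circular and contains no relevant content: nowhere do you use the defining properties of a uniformly open foliation (the proper, $\mathcal{F}$-generic function with no leafwise local maxima), nor do you carry out the steps an actual proof requires, namely the foliated adaptation of Gromov's argument for Theorem \ref{gromov} --- using the exhausting function to compress the manifold by foliated isotopies onto a skeleton of positive codimension in the leaves, and then applying holonomic approximation there. (For calibration: the paper itself offers no proof of \ref{bertelson} either; it is quoted from \cite{Bertelson}, and the hypothesis is essential, since \cite{Bertelson1} gives counterexamples without uniform openness.)

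Even read charitably as a sketch of Theorem \ref{Main}, your outline diverges from the paper's actual argument in ways that create avoidable difficulties. The paper works with a \emph{first-order} relation on $\tilde{X}\times\tilde{X}$ whose sections are pairs $(j^1f_0,j^1f_1)$, each satisfying the gradient-like inequality $L(j^1f_i)(\tilde{Z})>\tilde{\delta}[|\tilde{Z}|^2+|L(j^1f_i)|^2]$ against the \emph{regularized} vector field $\tilde{Z}$: the zeros of $Z$ are removed by the cutoff construction along the $\Gamma_i$ before the $h$-principle is applied, and the Morse singularities are re-introduced only afterwards, by gluing in local models $g_t = a + x_1^2+\dots+x_k^2 - x_{k+1}^2+\dots+x_{2n}^2$ with $a$ large. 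Your proposal instead encodes Morse points as a second-order condition inside $\mathcal{R}$, which is problematic (the Morse condition at a prescribed point is not an open condition in the required sense, and the paper explicitly restricts to first jets because $(M\times\R^2)^{(1)}$ splits as a product while higher jets do not). Likewise, your "hardest step" --- symmetrizing Bertelson's output to the factored form $\Phi(x_1)+\Phi(x_2)$ --- is a self-inflicted obstacle: the paper never demands a diagonal section; it simply takes the resulting pair $(f_0,f_1)$ and uses one factor, $f_t(x)=f_0(x,t)$, so no descent or averaging argument is needed.
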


In \cite{Bertelson1} Bertelson has contructed counter examples that without the uniformly open condition \ref{bertelson} fails.\\

\section{Main Theorem} In this section we prove \ref{Main}. Let us first set some notations. First of all we have the Liouville vector fields $Z_0$ and $Z_1=Z_0+Z_h$ and let $Z_t=Z_0+tZ_1$ be the homotopy of uniformly open Liouville vector field. So we have 

\begin{enumerate}
\item $Z_0.\phi_0\geq \delta (|Z_0|^2+|d\phi_0|^2)$\\
\item $Z_0.\phi_1+Z_h.\phi_1\geq \delta' (|Z_0+Z_h|^2+|d\phi_1|^2)$\\
\end{enumerate}

With equality occurs in the above inequalities at the zeros of $Z_0$ and $Z_0+Z_h$. Define \[\phi_t=(1-t)\phi_0+t\phi_1\] Then observe that \[Z_0.d\phi_t=(1-t)Z_0.d\phi_0+tZ_0.d\phi_1\] Now consider 
\[
\begin{array}{rcl}
Z_0.d\phi_t+Z_{th}.d\phi_1 &=& (1-t)Z_0.d\phi_0+t[Z_0.d\phi_1+Z_h.d\phi_1]\\
 &\geq & (1-t)\delta (|Z_0|^2+|d\phi_0|^2)+t\delta' (|Z_0+Z_h|^2+|d\phi_1|^2)\\
 &\geq & min(\delta,\delta')[(1-t)|Z_0|^2+t|Z_0+Z_h|^2+(1-t)|d\phi_0|^2+t|d\phi_1|^2]
\end{array}
\]
 So we get 
 
 \[
 \begin{array}{rcl}
 \frac{(Z_0.d\phi_t+Z_{th}.d\phi_1)}{|Z_0+tZ_h|^2+|d\phi_t+d\phi_1|^2} &\geq & min(\delta,\delta')[\frac{(1-t)|Z_0|^2+t|Z_0+Z_h|^2}{(1-t)^2|Z_0|^2+t^2|Z_0+Z_h|^2+2t(1-t)|Z_0||Z_0+Z_h|+|d\phi_t+d\phi_1|^2}\\
  & & +\frac{(1-t)|d\phi_0|^2+t|d\phi_1|^2}{(1-t)^2|Z_0|^2+t^2|Z_0+Z_h|^2+2t(1-t)|Z_0||Z_0+Z_h|+|d\phi_t+d\phi_1|^2}]
 \end{array}
 \]
 
 Recall that (according to \ref{completed liouville}) on the infinite end of $\hat{X}$ the Liouville vector fields $Z_0$ and $Z_0+Z_h$ are equal to  $\partial_s$. Moreover since $Z_0$ and $Z_1$ do not have a common zero $|d\phi_t+d\phi_1|>0$ and since $\Gamma_i$'s are compact $|d\phi_t+d\phi_1|$ is bounded bellow.\\

  So the right hand side of the above inequality is bounded and hence the right hand side is equal to $\tilde{\delta}$ (say). So we get \[(Z_0.d\phi_t+Z_{th}.d\phi_1)\geq \tilde{\delta} [|Z_0+tZ_h|^2+|d\phi_t+d\phi_1|^2]\]
 
 Without loss of generality we assume that $Z_0\pitchfork Z_h$ otherwise we can use a relative version of $h$-principle.\\
 
  We replace $t$ by a new parameter $t'=f(t)$ where $f:[0,1]\to [0,1]$ is such that $f=0$ on $[0,\epsilon]$ and $f=1$ on $[1-\epsilon,1]$. We can replace the parameter in the above inequality.\\
 
 Define one forms $\alpha_{t'}$ and $\eta_{t'}$ as follows. First $\alpha_{t'}$, $\alpha_{t'}(Z_0)=d\phi_{t'}(Z_0)$,  $Z_{h}\in ker(\alpha_{t'}),\ for\ t\in [\epsilon,1-\epsilon],\ t'=f(t)$ and $\alpha_0=d\phi_0,\ \alpha_1=d\phi_1$. Similarly $\eta_{t'}(Z_{t'h})=\eta_{t'}(t'Z_h)=d\phi_1(t'Z_h)$, $Z_0\in ker\eta_{t'}\ for\ t\in [\epsilon,1-\epsilon],\ t'=f(t)$ and $\eta_0=d\phi_1=\eta_1$. So we have \[(\alpha_{t'}+\eta_{t'})(Z_0+t'Z_h)=(\alpha_{t'}(Z_0)+\eta_{t'}(Z_{t'h}))\geq \tilde{\delta} [|Z_0+t'Z_h|^2+|\alpha_{t'}+\eta_{t'}|^2]\] 
Now extending on $\tilde{X}$ and regularizing $Z_{t'}=Z_0+t'Z_h$ we get $\tilde{Z}$ as in \ref{1}. We extend $\alpha_{t'}$ and $\eta_{t'}$ to $\tilde{X}$ as $\alpha'(x,t')=\alpha_{t'}(x)\ and\ \eta'(x,t')=\eta_{t'}(x)$. Adjusting $\alpha'$ and $\eta'$ near $\Gamma_i$'s (\ref{1}) to $\tilde{\alpha}$ and $\tilde{\eta}$ so that \[(\tilde{\alpha}+\tilde{\eta})(\tilde{Z})>\tilde{\delta}[|\tilde{Z}|^2+|\tilde{\alpha}+\tilde{\eta}|^2]\]

Now we come to the $h$-principle part. Consider $M=\tilde{X}\times \tilde{X}$ and the trivial bundle $P\times P:M\times \mathbb{R}^2=\tilde{X}\times \mathbb{R}\times \tilde{X}\times \mathbb{R}\to M$ where $P:\tilde{X}\times \mathbb{R}\to \tilde{X}$ is the projection on the first factor. Observe that \[(M\times \mathbb{R}^2)^{(1)}=(\tilde{X}\times \mathbb{R})^{(1)}\times (\tilde{X}\times \mathbb{R})^{(1)}\] Note that this does not happen in case of higher order jet extensions as there will be mixed derivatives.\\

Observe that the section space $\Gamma(\tilde{X}\times \mathbb{R})=C^{\infty}(\tilde{X},\mathbb{R})$. There is a natural affine fibration $L:(\tilde{X}\times \mathbb{R})^{(1)}\to T^*(\tilde{X}\times \mathbb{R})$ given by $L(j^1f(x))=df_x$ where $f\in C^{\infty}(\tilde{X},\mathbb{R})$. Define the relation $\mathcal{R}\subset (M\times \mathbb{R}^2)^{(1)}$ as \[\mathcal{R}=\{(j^1f_0,j^1f_1)\in (M\times \mathbb{R}^2)^{(1)}:L(j^1f_i)(\tilde{Z})>\tilde{\delta}[|\tilde{Z}|^2+|L(j^1f_i)|^2]\ for\ i=0,1,\ for\ some\ \tilde{\delta}>0\}\] Obviously $(\tilde{\alpha}+\tilde{\eta},\tilde{\alpha}+\tilde{\eta})\in Sec\mathcal{R}$. Next we shall show that $\mathcal{R}$ is open and invariant under $\tilde{\mathcal{F}}\times \tilde{\mathcal{F}}$-foliated isotopy. This will conclude the proof of \ref{Main} in view of \ref{bertelson}. Only thing one needs to do is the following. Let $(f_0,f_1)$ is a resulting solution. Choose either $f_0$ or $f_1$ say $f_0$. Then define $f_t$ as \[f_t(x)=f_0(x,t)\] Now we have to re-introduce the singularities. Let $g_t$ be a family of Morse functions defined near $\Gamma_i$ with index same as the index of $Z$ along $\Gamma_i$. Let $\beta$ be a cutoff function such that $\beta=1$ on a tubular neighborhood $\mathcal{N}_i \supset N''_i$ and $\beta=0$ outside $\mathcal{N}'_i\supset \mathcal{N}_i$.Let $\beta_t(x)=\beta(x,t)$. Observe \[Z_t(\beta_tg_t+(1-\beta_t)f_t)=[\beta_tZ_t(g_t)+(1-\beta_t)Z_t(f_t)]+g_tZ_t(\beta_t)-f_tZ_t(\beta_t)\] Observe that $Z_t(\beta_t)$ has compact support and $g_t$ is of the form \[a+x_1^2+...+x_k^2-x_{k+1}^2+...+x_{2n}^2\] So if we take $a$ large enough then $(g_tZ_t(\beta_t)-f_tZ_t(\beta_t))>0$ and obviously compactly supported. So we get the desired result.\\

\begin{lemma}
The relation $\mathcal{R}$ is open and invariant under the action of $\tilde{\mathcal{F}}\times \tilde{\mathcal{F}}$-foliated isotopies.
\end{lemma}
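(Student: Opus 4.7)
The plan is to verify openness and foliated invariance separately. Openness should follow essentially by inspection, while invariance requires a small computation with the canonical lift of foliated isotopies to the $1$-jet bundle.

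For openness, I would observe that the defining condition
$$L(s_i)(\tilde{Z}) > \tilde{\delta}\bigl[\,|\tilde{Z}|^2 + |L(s_i)|^2\,\bigr], \quad i = 0,1,$$
is a strict inequality among smooth jet-dependent quantities, with $\tilde{Z}$ fixed data that is nowhere zero on $\tilde{X}$ by the regularization at the start of Section 4. Since $\tilde{\delta}$ is existentially quantified, the pointwise condition reduces to the open inequality $L(s_i)(\tilde{Z}) > 0$ (with $L(s_i) \neq 0$), and any jet in $\mathcal{R}$ has a $C^{0}$-neighborhood still lying in $\mathcal{R}$ with a slightly smaller $\tilde{\delta}$.

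For foliated invariance, let $\psi_t$ be an $\tilde{\mathcal{F}} \times \tilde{\mathcal{F}}$-foliated isotopy of $M$. The canonical lift to the $1$-jet bundle sends $j^{1} f_i(p) \mapsto j^{1}(f_i \circ \psi_t^{-1})(\psi_t(p))$, and the underlying $1$-form transforms by pullback. Evaluating on $\tilde{Z}$ at the new basepoint gives
$$L\bigl(\psi_t \cdot j^{1} f_i\bigr)\bigl(\tilde{Z}(\psi_t(p))\bigr) \;=\; L(j^{1} f_i)\bigl((d\psi_t|_p)^{-1}\,\tilde{Z}(\psi_t(p))\bigr).$$
Because $\psi_t$ preserves $T(\tilde{\mathcal{F}} \times \tilde{\mathcal{F}})$ and is connected to the identity, the vector $(d\psi_t|_p)^{-1}\tilde{Z}(\psi_t(p))$ remains in $T_p(\tilde{\mathcal{F}} \times \tilde{\mathcal{F}})$ with positive $\tilde{Z}$-component. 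This preserves the sign of $L(j^{1} f_i)(\tilde{Z})$, and the norms $|\tilde{Z}|$ and $|L(j^{1} f_i)|$ change only by bounded factors determined by the Jacobian of $\psi_t$, which can be absorbed into a smaller $\tilde{\delta}' > 0$. Hence the transformed jet lies in $\mathcal{R}$.

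The main obstacle I anticipate is bookkeeping for the $2$-dimensional action of $\psi_t$ on $T(\tilde{\mathcal{F}} \times \tilde{\mathcal{F}})$: a general foliated isotopy may act by a nontrivial orientation-preserving linear map on this plane and thereby mix the $\tilde{Z}$-generators coming from the two factors of $M = \tilde{X} \times \tilde{X}$. The cleanest workaround is to rely on the pointwise reformulation used in the openness step — so that only the sign of $L(s_i)(\tilde{Z})$ is what matters — and to note that this sign is stable under any positive linear combination of the $\tilde{Z}$-generators, which is exactly what an orientation-preserving foliated isotopy produces.
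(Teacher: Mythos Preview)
Your argument follows the same line as the paper's: openness is declared immediate from the defining strict inequality, and for invariance the paper writes the single line $\psi_s^*(df)(\tilde Z)=df(d\psi_s(\tilde Z))\geq c\,df(\tilde Z)$ with $c>0$ (justified by $\psi_0=\mathrm{id}$ and connectedness), then absorbs the constant into $\tilde\delta$ exactly as you do. The paper does not address the possible two-dimensional mixing of the $\tilde Z$-directions that you flag in your final paragraph; in that respect your discussion is more scrupulous than the original, but the underlying mechanism is identical.
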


\begin{proof}
Openness of $\mathcal{R}$ follows directly from the definition of $\mathcal{R}$.\\

For second part we see $\psi_s^*(df)(\tilde{Z})=df(d\psi_s(\tilde{Z}))\geq cdf(\tilde{Z})$, where $c$ is a positive real number. Positive as $\psi_0=id$ and $M\times\mathbb{R}^2$ is connected. So \[\psi_s^*(df)(\tilde{Z})\geq cdf(\tilde{Z})>c\tilde{\delta}[|\tilde{Z}|^2+|df|^2]\]  
\end{proof}

\end{document}